\documentclass[12pt]{amsart}
\usepackage{hyperref}
\usepackage{amsmath}
\usepackage{mathrsfs}
\usepackage{amssymb}
\usepackage{tikz-cd}
\usepackage{mathtools}
\setlength{\textwidth}{\paperwidth}
\setlength{\textheight}{\paperheight}
\addtolength{\textwidth}{-2.3in}
\addtolength{\textheight}{-2in}
\setlength{\footskip}{0.25in}
\calclayout
\DeclareMathOperator{\Spec}{Spec}
\newcommand{\PP}{\mathbb{P}}
\newcommand{\ZZ}{\mathbb{Z}}

\newcommand{\CC}{\mathbb{C}}
\newcommand{\QQ}{\mathbb{Q}}

\newcommand{\FF}{\mathbb{F}}

\numberwithin{equation}{section}

\newtheorem{theorem}[equation]{Theorem}

\theoremstyle{definition}

\newtheorem*{remark}{Remark}
\begin{document}
\title{Rational linear subspaces of hypersurfaces over finite fields}
\author[M. I. de Frutos-Fern\'andez]{Mar\'ia In\'es de Frutos-Fern\'andez}
\address{
Hausdorff Center for Mathematics
\\Universit\"at Bonn\\ 53115 Germany}
\email{midff@math.uni-bonn.de}
\author[S. Garai]{Sumita Garai}
\address{Perelman School of Medicine\\ University of Pennsylvania, PA 19104}
\email{sgarai@upenn.edu}
\author[K. Isham]{Kelly Isham}
\address{Department of Mathematics\\Colgate University, Hamilton, NY 13346}
\email{kisham@colgate.edu}
\author[T. Murayama]{Takumi Murayama}
\address{Department of Mathematics\\Purdue University\\West Lafayette, IN 47907}
\email{murayama@purdue.edu}
\author[G. Smith]{Geoffrey Smith}
\address{Department of Mathematics\\ University of Illinois Chicago\\ Chicago, IL 60607}
\email{geoff@uic.edu}
\date{}
\subjclass[2020]{14G15, 14J70}
\begin{abstract}
Let $X \subset \PP^n$ be a hypersurface of degree $d$ defined over a
finite field of characteristic $p > 0$.
We prove that if $n \ge r + \binom{d+r}{r+1}$, then $X$
contains a rational $r$-plane.
We prove better bounds when $X$ is smooth and $p$ is sufficiently large.
We also present experimental data regarding the existence of rational lines on
cubic threefolds over $\mathbb{F}_7$, $\mathbb{F}_8$, and $\mathbb{F}_9$.
In particular, we construct an example of a smooth cubic threefold over
$\mathbb{F}_7$ with exactly $8$ rational lines.
It remains an open question whether smooth cubic threefolds over $\mathbb{F}_7$,
$\mathbb{F}_8$, and $\mathbb{F}_9$ always contain a rational line.
\end{abstract}
\maketitle
\section{Introduction}
In 1849, Cayley \cite{Cay1849} and Salmon \cite{Sal1849} showed that smooth
cubic surfaces over the complex numbers contain exactly 27 lines.
This result does not extend to cubic surfaces over finite fields.
For example, the cubic surfaces
\[
  \bigl\{x_1^3+x_2^3+x_3^3+ax_4^3 = 0\bigr\} \subset \PP^3_{\FF_q}
\]
have no $\FF_q$-lines whenever $q \equiv 1 \pmod 3$ and $a \in \FF_q$ is not a
cube \cite[\S3]{DLR17}.
See \cite[\S5]{SD81} and \cite[\S1.5]{KSC04} for more examples.
On the other hand, Debarre, Laface, and Roulleau
\cite{DLR17} showed that in higher dimensions, cubic hypersurfaces $X \subset
\PP^n_{\FF_q}$ have $\FF_q$-rational lines for all $n \geq 4$, except possibly
when $n=4$ and $q \leq 9$ or $n=5$ and $q=3$.
They also constructed examples of cubic threefolds with no rational lines when
$q \le 5$.
\par A natural follow-up question is: \emph{When do hypersurfaces over finite
fields contain rational linear subspaces of higher dimensions?}
The case of rational points is well-understood because of the
Chevalley--Warning theorem \cite{Chev35,War35},
the Lang--Weil bound \cite{LW54},
the Ax--Katz theorem \cite{Ax64,Kat71}, etc.
As mentioned above, there are examples of cubic surfaces over finite fields
without any rational lines, and
the case of rational lines on cubic hypersurfaces of arbitrary dimension
was studied in \cite{DLR17}.
On the other hand, there is less known about the existence
of rational linear subspaces of dimensions \(\ge 2\).\medskip
\par In the first half of this paper, we show that if $X$ is a hypersurface over
a finite field whose dimension is sufficiently large compared to the degree
of $X$, then $X$ contains a rational subspace.
For smooth $X$, we have the following result:
\begin{theorem}\label{main}
Fix $d \ge 1$, $n \ge 4$, and $r \geq 1$.
Suppose that one of the following hypotheses hold:
\begin{enumerate}
  \item $r = 1$ and $n \ge 2d-1$.
  \item $r \geq 2$ and
    \(
      n\geq 2 \displaystyle\binom{d+r-1}{r}+r
    \).
\end{enumerate}
Then, there exists a finite set of primes $\mathcal{P}(d,n,r)$
depending only on $d$, $n$, and $r$ such
that the following holds:
Every smooth hypersurface $X$ of degree $d$ in $\PP^n$ over a finite field of
characteristic $p \notin \mathcal{P}(d,n,r)$ contains a rational $r$-plane.
\end{theorem}
For possibly singular $X$ but for \emph{arbitrary} characteristics $p >0$,
we have the following more restrictive bound on $d$.
This result generalizes \cite[Theorem 6.1]{DLR17}.
\begin{theorem}\label{secondResult}
Let $X$ be the zero locus of some degree $d$ polynomial in $\PP^n_k$ with $k$
quasi-algebraically closed, for example a finite field.
Let $r$ be a positive integer such that
\[
n\geq r+ \binom{d+r}{r+1}.
\]
Then, \(X\) contains a rational \(r\)-plane.
Moreover, for any rational $r'$-plane $\Lambda$ contained in $X$ with $0 \le r'<r$, there is a rational $r$-plane containing $\Lambda$ and contained in $X$.
\end{theorem}
Finite fields are quasi-algebraically closed by the Chevalley--Warning theorem \cite{Chev35,War35}.\medskip
\par In the second half of this paper, 
we explore the existence of rational lines on threefolds over $\FF_q$ for $q=7,8,9$.
It remains an open question whether smooth cubic threefolds over these fields
always contain a rational line.
While we were not able to resolve the existence question, we ran some computer experiments that we summarize in Section \ref{sect:experiment}. The statistics in Section \ref{sect:experiment} do not seem to clarify whether all threefolds over $\FF_7$ should contain at least one rational line; however, they suggest that all threefolds over $\FF_q$ for $q=8,9$ probably will.
In particular, we construct a cubic threefold over $\FF_7$ with exactly $8$
rational lines in \eqref{eq:8linesf7}.

\begin{remark}
Another recent work that addresses linear subspaces of hypersurfaces is
\cite{KP21}.
Therein, Kazhdan and Polishchuk show that if $X \subset \PP^n$ is a hypersurface
of degree $d$ defined over a perfect field $k$ and if the base change
$X_{\bar{k}}$ to the algebraic closure of $k$ contains a
linear subspace of codimension $e$ in $\PP^n$, then $X$ contains a $k$-rational linear
subspace of codimension $\le de$.
\end{remark}
\subsection*{Acknowledgements}
This work was undertaken while the authors were participants at the 2019 AMS Mathematics Research Community on Explicit Methods in Characteristic $p$, and we are grateful to the organizers and the AMS for support. In addition, we would like to thank Kiran Kedlaya,  Bjorn Poonen, and Eric Riedl for helpful conversations.
TM was supported by the National Science Foundation under Grant No.\ DMS-1902616.
\section{Proofs}
\subsection{Proof of Theorem \ref{main}}\label{sect:mainproof}
The key geometric input in the proof of Theorem \ref{main} will be the following theorem of Beheshti and Riedl.
For the statement below, recall that for a projective variety $X \subset \PP^n$,
the \emph{Fano variety of $r$-planes} $F_r(X)$ is the
subsceheme of the Grassmannian $G(r+1,n+1)$ parametrizing the $r$-planes
contained in $X$.
See \cite{AK77}.
\begin{theorem}[{Beheshti and Riedl \cite[Theorem 1.3]{BR19}}]\label{BR19}
Let $X\subset \PP^n_\CC$ be a smooth hypersurface of degree $d$ and suppose $n\geq 2d-1$ and $n \geq 4$. Then, the Fano variety of lines $F_1(X)$ is irreducible of the expected dimension $2n-d-3$.

Likewise, the Fano variety $F_r(X)$ of $r$-planes will be irreducible of the expected dimension $(r+1)(n-r)-\binom{d+r }{ r}$ provided
\[
n\geq 2 \binom{d+r-1 }{ r}+r.
\]
\end{theorem}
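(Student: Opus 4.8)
The plan is to present $F_r(X)$ as the zero scheme of a section of a vector bundle on the Grassmannian, read off its deformation theory from the normal bundle of a plane, and combine a uniform dimension estimate with a connectedness argument. Write $G = G(r,n)$ for the Grassmannian of $r$-planes in $\PP^n$, with tautological rank $(r+1)$ subbundle $\mathcal S$, and set $\mathcal E = \mathrm{Sym}^d \mathcal S^\vee$, a bundle of rank $\binom{d+r}{r}$. A form $f$ cutting out $X$ restricts on each $\Lambda$ to $f|_\Lambda \in \mathrm{Sym}^d(\mathcal S^\vee)_{[\Lambda]}$, so it defines a global section $s_f$ of $\mathcal E$ whose zero scheme is exactly $F_r(X)$. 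Since $\mathrm{rk}\,\mathcal E = \binom{d+r}{r}$ and $\dim G = (r+1)(n-r)$, every nonempty component of $F_r(X)$ has dimension at least $e := (r+1)(n-r) - \binom{d+r}{r}$, and $F_r(X)$ is a local complete intersection---hence Cohen--Macaulay, so that Serre's condition $S_2$ holds---as soon as it has pure dimension $e$. First I would record the infinitesimal dictionary: the tangent space to $F_r(X)$ at $[\Lambda]$ is $H^0(\Lambda, N_{\Lambda/X})$, and $F_r(X)$ is smooth of dimension $e$ at $[\Lambda]$ whenever $H^1(\Lambda, N_{\Lambda/X}) = 0$.

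The normal bundle sequence is the computational engine. Choosing coordinates so that $\Lambda = \{x_{r+1} = \cdots = x_n = 0\}$, smoothness of $X$ along $\Lambda$ is equivalent to surjectivity of the map $\phi \colon \mathcal O_\Lambda(1)^{\oplus(n-r)} \to \mathcal O_\Lambda(d)$ given by the transverse partial derivatives $(\partial f/\partial x_j)|_\Lambda$, so for smooth $X$ one has
\[
0 \to N_{\Lambda/X} \to \mathcal O_\Lambda(1)^{\oplus(n-r)} \xrightarrow{\phi} \mathcal O_\Lambda(d) \to 0 .
\]
As $H^1(\PP^r, \mathcal O(1)) = 0$, the long exact sequence identifies $H^1(N_{\Lambda/X})$ with the cokernel of
\[
H^0(\phi) \colon H^0(\mathcal O_\Lambda(1))^{\oplus(n-r)} \to H^0(\mathcal O_\Lambda(d)),
\]
and a Riemann--Roch count gives $\chi(N_{\Lambda/X}) = e$. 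Thus the singular locus of $F_r(X)$ is exactly the bad locus $B(X) = \{[\Lambda] : H^0(\phi) \text{ is not surjective}\}$, the planes along which the $n-r$ degree $(d-1)$ normal derivatives fail to generate $\mathcal O_\Lambda(d)$ under multiplication by linear forms.

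The technical heart, and the decisive place the hypothesis $n \geq 2\binom{d+r-1}{r} + r$ is used, is a dimension estimate for $B(X)$ that is \emph{uniform over all smooth} $X$: I would prove $\dim B(X) \leq e - 2$. The mechanism is the incidence variety $\Psi = \{([\Lambda], [X]) : X \text{ smooth along } \Lambda,\ H^0(\phi) \text{ not surjective}\}$ inside $G \times \PP(H^0(\mathcal O_{\PP^n}(d)))$, together with the transitive action of $\mathrm{PGL}_{n+1}$ on $G$: the projection $\Psi \to G$ has all fibers of one dimension, computable over a single standard plane as the codimension of a determinantal non-surjectivity condition on the space of forms vanishing on that plane. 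The governing linear-algebra fact is that $n-r$ general degree $(d-1)$ forms on $\PP^r$ generate $\mathcal O(d)$ after multiplication by linear forms, with failure loci whose codimension grows with $n-r$; the bound $n \geq 2\binom{d+r-1}{r}+r$ is calibrated so that $\binom{d+r-1}{r} = h^0(\mathcal O_{\PP^r}(d-1))$ is small enough relative to $n-r$ to force these loci into codimension at least two inside $F_r(X)$. Converting the resulting bound on $\dim\Psi$ into the pointwise bound for \emph{every} smooth $X$, rather than only for a dense open set of hypersurfaces, is the main obstacle; I expect to handle it by stratifying $\Psi$ according to the corank of $H^0(\phi)$ and bounding each stratum's fibers over $\PP(H^0(\mathcal O_{\PP^n}(d)))$ separately.

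With these estimates the conclusion is formal. The bound $\dim B(X) \leq e-2 < e$ forces $\dim F_r(X) = e$ with $F_r(X)$ a local complete intersection, hence $S_2$, while $\mathrm{codim}\, B(X) \geq 2$ gives $R_1$; therefore $F_r(X)$ is normal. For connectedness I would exploit the full incidence variety $\Phi = \{([\Lambda],[X]) : \Lambda \subset X\}$, which is smooth and irreducible since its projection to $G$ is a projective bundle; applying Stein factorization to the proper map $\pi \colon \Phi \to \PP(H^0(\mathcal O_{\PP^n}(d)))$, the classical irreducibility of $F_r(X)$ for the generic $X$ shows that the finite part of the factorization is birational onto the normal base, hence an isomorphism, so \emph{every} fiber of $\pi$---in particular $F_r(X)$ for each smooth $X$---is connected. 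A connected normal scheme is irreducible, so $F_r(X)$ is irreducible of dimension $e$. The line case $r=1$ is the specialization where $\mathcal E = \mathrm{Sym}^d\mathcal S^\vee$ and $\phi$ simplify enough that the weaker hypothesis $n \geq 2d-1$ suffices for the same estimates.
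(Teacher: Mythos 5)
First, a framing point: the paper never proves this statement at all --- it is quoted verbatim from Beheshti--Riedl \cite{BR19}, and the paper's only contribution concerning it is to spread it from $\CC$ to all but finitely many characteristics via constructibility (\cite[Th\'eor\`eme 9.7.7]{EGA4.3}) in Section \ref{sect:mainproof}. So your proposal has to be measured against the actual argument in \cite{BR19}, and it diverges from it at exactly the hard point.

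The genuine gap is your uniform estimate $\dim B(X) \leq e-2$ for \emph{every} smooth $X$, which you correctly identify as the technical heart but do not actually supply. The mechanism you propose --- bounding the incidence variety $\Psi \subset G \times \PP^N$ using transitivity of $\mathrm{PGL}_{n+1}$ and a determinantal codimension count over a single standard plane --- does compute $\dim \Psi$, but a bound on $\dim \Psi$ controls $\dim B(X)$ only for \emph{generic} $X$: fibers of $\Psi \to \PP^N$ can jump over special smooth hypersurfaces, and stratifying $\Psi$ by the corank of $H^0(\phi)$ does not repair this, since the fibers of each stratum over $\PP^N$ can jump just as freely. This is precisely the gulf between the Debarre--Manivel theorem (irreducibility and expected dimension of $F_r(X)$ for \emph{general} $X$, proved via exactly such incidence counts) and the Beheshti--Riedl theorem about \emph{all} smooth $X$. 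Their actual proof uses a different engine: control of the families of lines and planes through general points of arbitrary subvarieties of $X$, resting on a generic-smoothness statement (\cite[Proposition 2.2]{BR19}, a version of \cite[Corollary II.3.10.1]{Kol96}) --- which is the very reason the present paper remarks that the argument is confined to characteristic zero and must be transported to characteristic $p$ by constructibility rather than reproved there. Your formal endgame is sound as conditional reasoning: excess-dimensional components must lie in $B(X)$, so the estimate forces pure dimension $e$ and the l.c.i.\ property gives $S_2$; codimension-two singular locus gives $R_1$, hence normality by Serre; Stein factorization of the irreducible universal family $\Phi \to \PP^N$ together with generic irreducibility and Zariski's main theorem gives connectedness of every fiber; and connected plus normal gives irreducible. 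But all of it hinges on the missing estimate. A second, smaller gap: you assert the line case under $n \geq 2d-1$ follows ``by the same estimates,'' yet specializing your $r$-plane hypothesis at $r=1$ gives only $n \geq 2\binom{d}{1}+1 = 2d+1$; the improvement to $2d-1$ in \cite{BR19} is a separate, more delicate argument (their work toward the de Jong--Debarre conjecture), not a formal specialization. (Also, your claim that the singular locus of $F_r(X)$ is \emph{exactly} $B(X)$ is only correct a posteriori, once pure dimension $e$ is known; a priori a point with $H^1 \neq 0$ could be a smooth point of an excess-dimensional component.)
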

Given $n,d,r$, we will show this result holds over arbitrary fields of all but finitely many characteristics, allowing us to apply the Lang--Weil bound:
\begin{theorem}[{Lang and Weil \cite[Theorem 1]{LW54}}]\label{langweil}
  There exists a constant $A(n,d,r)$ depending only on $n,d,r$ such that the
  following holds:
  For every geometrically irreducible projective variety $V \subset
  \PP^n_{\FF_q}$ of degree $d$ and dimension $r$ over a finite field $\FF_q$, we
  have
  \[
    \Bigl\lvert \bigl\lvert V(\FF_q) \bigr\rvert - q^r \Bigr\rvert \le (d-1)(d-2)
    q^{r-\frac{1}{2}} + A(n,d,r)q^{r-1}.
  \]
\end{theorem}
Throughout the proof of Theorem \ref{main}, all schemes will be defined over
$\Spec(\ZZ)$ unless otherwise indicated.
\begin{proof}[Proof of Theorem \ref{main}]
  Let $\PP^N$ be the parameter space of all degree $d$ hypersurfaces in $\PP^n$,
  where $N=\binom{d+n}{d}-1$. Let $Y\subset\PP^N $ be the parameter space of all
  smooth hypersurfaces, and let $\pi\colon F\to Y$ be the universal Fano scheme of $r$-planes \cite[p.\ 5 and Theorem 3.3\((i)\)]{AK77}.
  This morphism is finitely presented, so by \cite[Th\'eor\`eme 9.7.7]{EGA4.3}, the locus
  \[
    W \coloneqq \bigl\{y \in Y \mathbin{:} F_y\ \text{is geometrically
    irreducible}\bigr\}
  \]
  is constructible. 
  By Theorem \ref{BR19}, we know that after base change to $\QQ$, we have $Y_\QQ
  \subset W_\QQ$.
  Thus, for some open set $U\subset Y$ such that $U_\QQ=Y_\QQ$, we have that
  every Fano scheme $F_y$ corresponding to $y\in U$ is geometrically
  irreducible. The complement $Z \coloneqq U\setminus Y$ is closed in $Y$, so
  its image under the final map $Z\to \Spec(\ZZ)$ is constructible and does not
  contain the generic point. Thus, the image of $Z$ in $\Spec(\ZZ)$ is a finite
  set of primes $p_1,\ldots,p_s$. 
  \par Set
  \[
    R\coloneqq\Spec\biggl(\ZZ\biggl[\frac{1}{p_1\cdots p_s}\biggr]\biggr),
  \]
  in which case the map $\pi\colon F_R\to Y_R$ is a finitely presented map with geometrically irreducible fibers.
  Applying Theorem \ref{BR19} and the upper semicontinuity of fiber dimension \cite[Th\'eor\`eme 13.1.3]{EGA4.3}, the fibers are all of dimension at least $(r+1)(n-r)-\binom{d+r}{r}$.
  Moreover, we have that $\pi\colon F_R\to Y_R$ is projective. Then, by the Lang--Weil
  bound (Theorem \ref{langweil}), for $q$ sufficiently large, we have that for
  any $\FF_q$ point $\{y\}\to Y$ the Fano scheme $F_y$ has a rational point.
\end{proof}

\subsection{Proof of Theorem \ref{secondResult}}
Over finite fields, the key ingredient for the proof of Theorem
\ref{secondResult} is the Chevalley--Warning theorem:
\begin{theorem}[{Chevalley \cite[Th\'eor\`eme on p.\ 75]{Chev35} and Warning
  \cite[p.\ 79]{War35}}]\label{chevwar}
  Let $k$ be a finite field of characteristic $p > 0$.
  Consider $s$ polynomials $f_i \in k[x_1,x_2,\ldots,x_n]$ of degree $d_i$ such
  that $n > \sum_{i=1}^s d_i$.
  Let $V = \{f_1=f_2=\cdots=f_s=0\} \subseteq \mathbb{A}^n_k$.
  Then,
  \[
    \bigl\lvert V(k)\bigr\rvert \equiv 0 \pmod p.
  \]
\end{theorem}
Since $p \ge 2$, the Chevalley--Warning theorem (Theorem \ref{chevwar})
implies that finite fields are
quasi-algebraically closed.
It therefore suffices to prove Theorem \ref{secondResult} for
quasi-algebraically closed fields.
\begin{proof}[Proof of Theorem \ref{secondResult}]
  We first note that \(X(k)\) is nonempty by 
  the assumption that $k$ is quasi-algebraically closed and the fact that
  \[
    n \ge r + \binom{d+r}{r+1} \ge r + d > d.
  \]
  By setting \(r' = 0\), it therefore suffices to show the ``Moreover'' claim.
  Additionally,
  we may assume $r'=r-1$, as for more general $r'$ we may simply repeatedly apply the result to produce first an $(r'+1)$-plane contained in $X$, then an $(r'+2)$-plane, and so on.

  Say $X$ is cut out by $F$, and let $\Lambda$ be a rational $(r-1)$-plane
  contained in $X$. By applying some automorphism of $\PP^n$, we may assume $\Lambda$ is the plane cut out by $x_{r}=\cdots=x_n=0$. The space of $r$-planes in $\PP^n$ containing $\Lambda$ is naturally identified with the $(n-r)$-plane $\Lambda^\circ$ cut out by $x_0=\cdots=x_{r-1}=0$; an $r$-plane $\Lambda'$ containing $\Lambda$ is identified with its point of intersection with the plane $\Lambda^\circ$. Writing $d_\bullet=(d_0,\ldots, d_{r-1})$, we can rewrite $F$ in the form
  \[
    F(x_0,\ldots,x_n)=\sum_{d_\bullet}x_0^{d_0}\cdots x_{r-1}^{d_{r-1}}\,F_{d_\bullet}(x_r,\ldots,x_n),
  \]
  with each $F_{d_\bullet}$ homogeneous of degree $d-\sum_{i}d_i$. Because $F$ vanishes on $\Lambda$, $F_{d_\bullet}=0$ if $d-\sum_i d_i=0$, and the space of $r$-planes through $\Lambda$ in $X$ is cut out by
  \[
    \bigl\{F_{d_\bullet}\mathbin{:}0\leq d_0+\cdots+d_{r-1}< d\bigr\}
  \]
  on $\Lambda^\circ$. For $j$ any positive integer, there are $\binom{j+r-1}{r-1}$ ways to write $j$ as a sum $j=i_0+\cdots+i_{r-1}$ with each $j_\ell$ a nonnegative integer. Thus the sum of the degrees of all the $F_{d_\bullet}$ is given by 
  \[
    \sum_{0\leq j <d}\binom{j+r-1}{r-1}(d-j).
  \]
  Using the hockey-stick identity on this sum gives the sum of degrees
  \[
    \sum_{0\leq j <d}\binom{j+r-1}{r-1}(d-j)=\binom{d+r}{r+1}.
  \]
  Since $k$ is quasi-algebraically closed and we have
  \[
    n-r\geq \binom{d+r}{r+1}
  \]
  by hypothesis, the locus in $\PP^{n-r}$ cut out by the $F_I$ includes a rational point, giving a rational $r$-plane through $\Lambda$.
\end{proof}

\section{Experimental Data}\label{sect:experiment}
	In \cite{DLR17} the authors prove that any smooth cubic threefold $X \subset \mathbb{P}^4_{\mathbb{F}_q}$ contains a rational line whenever $q \geq 11$. They also present examples of smooth cubic threefolds with no lines defined over $\mathbb{F}_q$ for $q=2,3,4,5$.  This existence question remains open in the cases $q=7,8,9$. We run some computer experiments to collect data about these open cases.
	
	For each $q$ in $\{2,3,4,5,7,8,9,11\}$, we obtained a random sample of $10^4$ cubic threefolds defined over $\mathbb{F}_q$ and we computed the 
	number of rational lines in each threefold using Magma \cite{magma}. We repeated this experiment with random samples of $10^4$ smooth cubic threefolds. The sample size was restricted by the exponential growth of the computation time: for instance, running the code on a $10^4$ sample of cubic threefolds took approximately four hours and 42 minutes of CPU time for $q=7$, and three days and 20 hours for
	$q=11$. The results are summarized in Tables \ref{tab:3fold} and \ref{tab:3fold_smooth}, respectively.
	
	\begin{table}[ht!]
		\centering
		\renewcommand{\arraystretch}{1.1}
		\begin{tabular}{|c|c|c|c|c|c|c|}
			\hline
			$q$ & Min & Max & Median & Mean & SD \\
			\hline
			2 & 0 & 46 & 9 & 9.697 & 5.835\\
			3 & 0 & 71 & 13 & 14.966 & 8.517\\
			4 & 0 & 115 & 21 & 22.710 & 11.603\\
			5 & 4 & 145 & 30 & 32.390 & 14.967\\	
			7 & 11 & 224 & 55 & 58.332 & 22.759\\
			8 & 16 & 248 & 71 & 74.717 & 27.071\\
			9 & 25 & 314 & 87 & 92.066 & 32.206\\
			11 & 35 & 357 & 129 & 134.154 & 42.089\\
			
			\hline
		\end{tabular}%
		\renewcommand{\arraystretch}{1}
		\vspace{5pt}
		\caption{Number of lines on cubic threefolds over $\mathbb{F}_q$.}
        \label{tab:3fold}
	\end{table}%
	\begin{table}[ht!]
		\centering
		\renewcommand{\arraystretch}{1.1}
		\begin{tabular}{|c|c|c|c|c|c|c|}
			\hline
			$q$ & Min & Max & Median & Mean & SD \\
			\hline
			2 & 0 & 28 & 6 & 6.9778 & 4.482\\
			3 & 0 & 66 & 12 & 13.0187 & 7.510\\
			4 & 1 & 92 & 19 & 20.9622 & 10.694\\
			5 & 3 & 126 & 29 & 31.0481 & 14.322\\
			7 & 8 & 209 & 53 & 56.8357 & 22.173\\
			8 & 15 & 210 & 69 & 72.8802 & 27.003\\
			9 & 16 & 279 & 87 & 91.1203 & 31.266\\
			11 & 43 & 376 & 126 & 131.633 & 40.552\\
			
			\hline
		\end{tabular}%
		\renewcommand{\arraystretch}{1}
		\vspace{5pt}
		\caption{Number of lines on smooth cubic threefolds over $\mathbb{F}_q$.}
        \label{tab:3fold_smooth}
	\end{table}%

	\begin{figure}[t]
		\begin{center}
			\begin{minipage}[t]{.49\textwidth}				
				\begin{center}
					\includegraphics[width=1.0\textwidth]{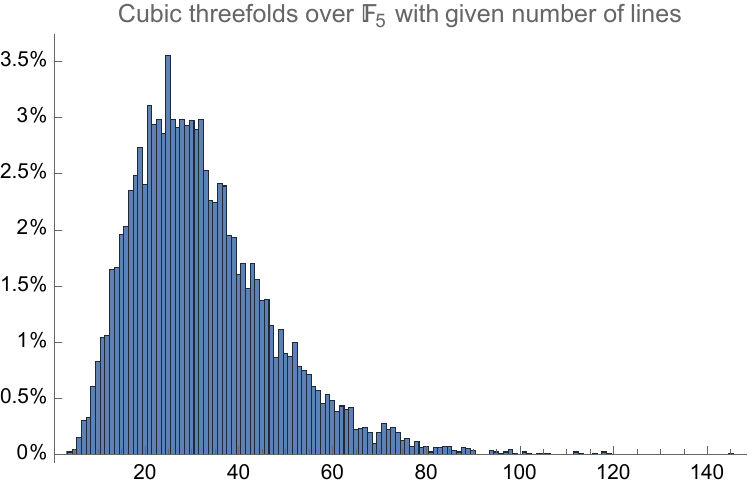}
				\end{center}
			\end{minipage}
			\begin{minipage}[t]{.49\textwidth}				
				\begin{center}
					\includegraphics[width=1.0\textwidth]{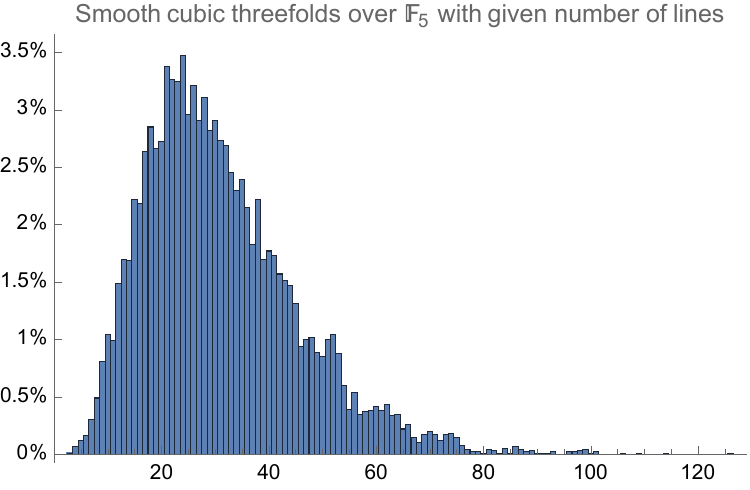}
				\end{center}
			\end{minipage}
			
			\begin{minipage}[t]{.49\textwidth}
				\begin{center}
					\includegraphics[width=1.0\textwidth]{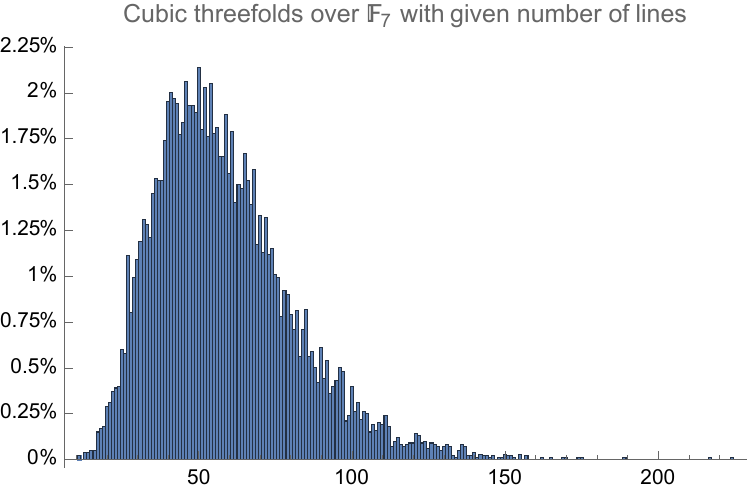}
				\end{center}
			\end{minipage}
			\begin{minipage}[t]{.49\textwidth}
				\begin{center}
					\includegraphics[width=1.0\textwidth]{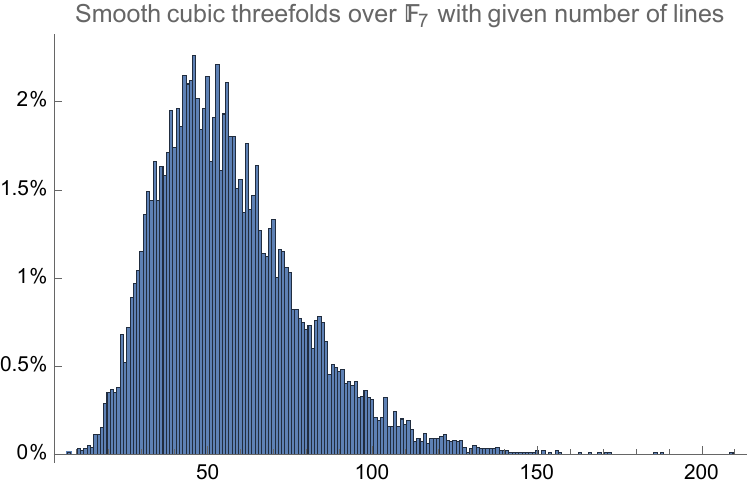}
				\end{center}
			\end{minipage}
			
				\begin{minipage}[t]{.49\textwidth}
					\begin{center}
						\includegraphics[width=1.0\textwidth]{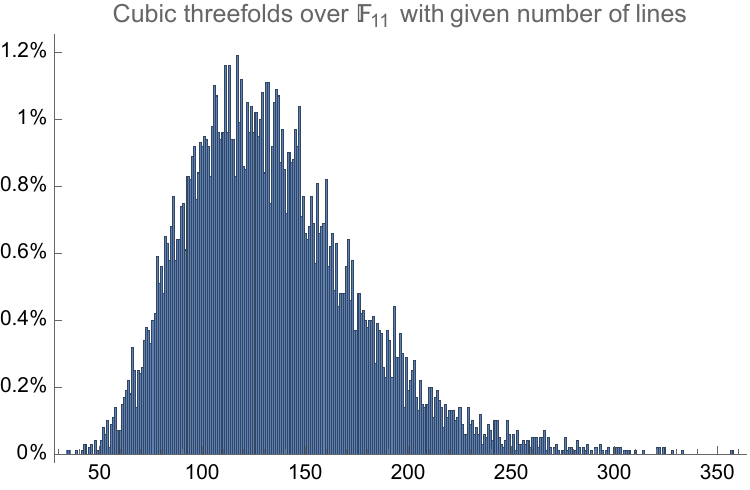}
					\end{center}
				\end{minipage}
				\begin{minipage}[t]{.49\textwidth}
					\begin{center}
						\includegraphics[width=1.0\textwidth]{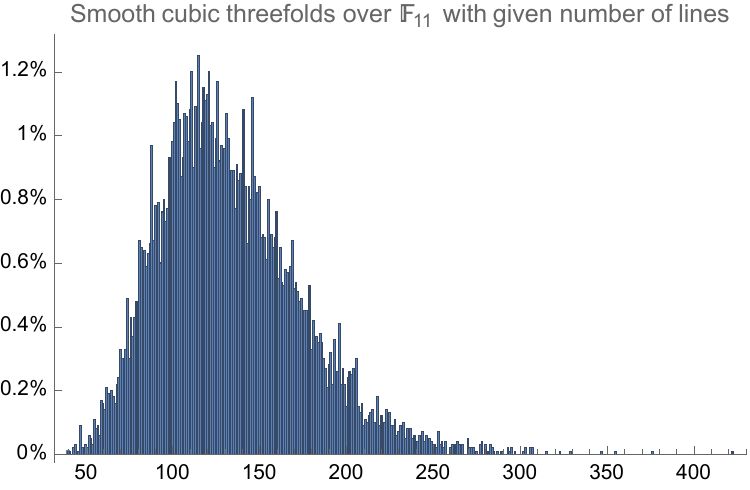}
					\end{center}
				\end{minipage}
		\end{center}
	\caption{Comparing number of lines for $q=5,7,11$.}
  \label{fig:histograms}
	\end{figure}
	
  We observe in Table \ref{tab:3fold} that the average number of lines in each sample of cubic threefolds is very close to the theoretical approximation from \cite[Formula (17)]{DLR17}.
  Denoting by \(G := G(2,n+1)\) the Grassmannian of lines in \(\mathbb{P}^n_{\mathbb{F}_q}\) and by \(\mathbb{P} := \mathbb{P}(H^0(\mathbb{P}^n_{\mathbb{F}_q},\mathcal{O}_{\mathbb{P}^n_{\mathbb{F}_q}}(d)))\) the parameter space of degree \(d\) hypersurfaces in \(\mathbb{P}^n_{\mathbb{F}_q}\), this approximation says that the average number of rational lines on a hypersurface \(X \subseteq \mathbb{P}^{n}_{\mathbb{F}_q}\) of degree \(d\) is
  \begin{equation}\label{eq:dlr24}
    \frac{\bigl\lvert G(\mathbb{F}_q) \bigr\rvert \bigl( q^{\dim(\mathbb{P}) - d} - 1 \bigr)}{\bigl( q^{\dim(\mathbb{P}) + 1} - 1 \bigr)} \sim 
    \bigl\lvert G(\mathbb{F}_q) \bigr\rvert  \bigl( q^{\dim(\mathbb{P}) - d - 1} \bigr).
  \end{equation}
  As stated in \cite[\S4.6]{DLR17},
  for cubic threefolds, the right hand side of \eqref{eq:dlr24} is
	\[ q^2 + q + 2 + 2q^{-1} + 2q^{-2} + q^{-3} + q^{-4} . \]
	The smooth cubic threefolds in the second set of samples contain slightly
  fewer rational lines on average, as recorded in Table \ref{tab:3fold_smooth}.
	
	We could not find examples with no rational lines for $q=7,8,9$. When $q=7$, we found an example of a smooth cubic threefold containing exactly 8 rational lines, given by
  \begin{equation}\label{eq:8linesf7}
    \begin{aligned}
      2x_1^3 &+ 6x_1^2x_2 + 5x_1x_2^2 + 5x_2^3 + 3x_1^2x_3 + 3x_1x_2x_3 + 
      4x_1x_3^2 + 4x_2x_3^2 + x_3^3 + x_1^2x_4 \\ &+ 4x_1x_2x_4 + x_2^2x_4 +5x_1x_3x_4 + 
      5x_2x_3x_4 + 5x_3^2x_4 + 2x_1x_4^2 + 6x_2x_4^2 \\ &+ 3x_3x_4^2 + 5x_4^3 + 2x_1^2x_5 
      + 2x_2^2x_5 + 2x_2x_3x_5 +6x_3^2x_5 +
      2x_1x_4x_5 \\ &+ 2x_2x_4x_5 + 4x_3x_4x_5 + 
      2x_4^2x_5 + 4x_1x_5^2 + 4x_2x_5^2 + 3x_3x_5^2 + x_4x_5^2 + x_5^3 = 0.
    \end{aligned}
  \end{equation}
	
  In Figure \ref{fig:histograms} we show the sample distributions we obtained for $q=5$ (for which there exist cubic threefolds with no rational lines), $q=7$ (for which the existence of rational lines remains open), and $q=11$ (for which it is known that all smooth cubic threefolds contain a rational line).

\bibliographystyle{plain}
\bibliography{hypersurfaces}
\end{document}